\documentclass[12pt,a4paper,twoside,reqno]{amsart}
\usepackage[a4paper,top=3cm,bottom=3cm,inner=2.7cm,outer=2.7cm]{geometry}

\usepackage{amssymb,amsmath,amsthm}
\usepackage[english]{babel}
\usepackage{booktabs}
\usepackage{array}
\usepackage{mathtools}
\usepackage{enumitem}
\usepackage{xparse}
\usepackage{multirow,longtable}
\usepackage{pst-node,placeins,xspace,fix-cm}
\usepackage{url}
\usepackage[colorlinks,linkcolor=blue,citecolor=violet,urlcolor=darkgray,final,hyperindex,linktoc=page,hyperfootnotes=true]{hyperref}

\newenvironment{enumT}{\begin{enumerate}[label=$($\hspace{-0.1ex}\roman*\hspace{0.13ex}$)$]}{\end{enumerate}}

\newenvironment{eqD*}{\begin{equation*}}{\end{equation*}\ignorespacesafterend}

\newtheorem{theorem}{Theorem}[section]
\newtheorem{proposition}[theorem]{Proposition}
\newtheorem{lemma}[theorem]{Lemma}
\newtheorem{corollary}[theorem]{Corollary}

\theoremstyle{definition}

\newtheorem{remark}[theorem]{Remark}

\def\nameit#1{\textrm{#1}~}
\def\thex{\nameit{Theorem}}
\def\prox{\nameit{Proposition}}
\def\corx{\nameit{Corollary}}
\def\lemx{\nameit{Lemma}}

\def\remx{\nameit{Remark}}

\numberwithin{equation}{section}

\def\:{\colon}

\newcommand{\aR}[2][]{\xRightarrow[#1]{#2}}

\DeclareMathOperator{\Ann}{Ann}
\DeclareMathOperator{\Ass}{Ass}
\DeclareMathOperator{\Min}{Min}
\DeclareMathOperator{\Max}{Max}

\DeclareMathOperator{\Soc}{Soc}
\DeclareMathOperator{\Spec}{Spec}
\DeclareMathOperator{\depth}{depth}
\newcommand{\ccont}{\operatorname{c}}
\newcommand{\C}{\ccont}
\newcommand{\cont}{\subseteq}
\newcommand{\contain}{\supseteq}
\newcommand{\0}{0}
\def\set#1#2{\left\{{#1}\left.\right|\,{#2}\right\}}
\newcommand{\Z}{\mathbb{Z}}
\newcommand{\iso}{\cong}
\newcommand{\Prod}[2]{\underset{#1}{\prod}\hspace{0.1ex}{#2}}
\def\phi{\varphi}
\newcommand{\too}{\longrightarrow}

\begin{document}

\title[Detecting Essential Ideals in Polynomial Rings]{Detecting Essential Ideals in Polynomial Rings}

\author{Amartya Goswami}
\address{Department of Mathematics and Applied Mathematics, University of Johannesburg, P.O.\ Box 524, Auckland Park 2006, South Africa; National Institute for Theoretical and Computational Sciences (NITheCS), Johannesburg, South Africa}
\email{agoswami@uj.ac.za}

\author{Luca Mesiti}
\address{Department of Mathematical Sciences, Stellenbosch University, Private Bag X1, Matieland, Stellenbosch 7600, South Africa}
\email{luca.mesiti@outlook.com}

\subjclass[2020]{Primary 13A15, 13B25; Secondary 13C05, 13E05, 13H10}
\keywords{Essential ideal, polynomial ring, content, McCoy theorem, associated prime, socle}

\date{}
\dedicatory{}

\begin{abstract}
We characterize essential ideals in polynomial rings $R[(X_\lambda)_{\lambda\in\Lambda}]$ over a commutative ring $R$ with $1$. We present several characterizations, letting $R$ vary among notable classes of rings. The idea is to detect essentiality by checking the intersections with principal ideals generated by polynomials in a test class. In the general commutative case, we apply McCoy’s zero-divisor criterion to construct an efficient test class of polynomials in terms of annihilators of content ideals. We then refine the test class and strengthen the result in several ways, assuming further properties on the coefficients ring $R$. Assuming that $R$ is Noetherian, we reduce the test class using the associated primes of $R$. When $R$ satisfies Serre's condition $(S_1)$, it suffices to use minimal primes. We also study the cases of $R$ Artinian, $R=\mathbb Z/n\mathbb Z$ and $R$ equal to the ideal-adic completion of an excellent ring, among others.
\end{abstract}

\maketitle
\enlargethispage{6pt}

\section{Introduction}\label{sec:introduction}

In this paper, we present criteria to detect essential ideals in polynomial rings $P=R[(X_\lambda)_{\lambda\in\Lambda}]$ with possibly infinitely many variables, letting $R$ vary among notable classes of rings. In general, all rings we consider are assumed to be nonzero, commutative, and unital.

An ideal $E$ of a ring $A$ is \emph{essential} if $E\cap J\ne0$ for every nonzero ideal $J$ of $A$; equivalently, $E$ is an essential submodule of the module $A$. Intuitively, an ideal is essential if it is so deeply woven into the ring that it overlaps with all other nonzero ideals. This makes essential ideals into excellent tools of topological nature that measure largeness inside the ring. A notable example is that essential extensions are fundamental in the construction of injective envelopes \cite{EckmannSchopf1953}. Moreover, Matlis theory relates essential ideals to associated primes in the Noetherian setting \cite{Matlis1958}. Since the 1970s, essential ideals have been studied intensely. In particular, descriptions of essential ideals are known for quotients of Pr\"ufer domains, rings of continuous functions,
incidence algebras, and matrix rings; see, for instance, 
\cite{AndruszkiewiczPryszczepko2026,Azarpanah1995,Azarpanah1997,GreenVanWyk1989,Spiegel2000,Taherifar2014,Taherifar2015}.
For polynomial rings, Armendariz's coefficient-annihilator formula in \cite{Armendariz1974} shows that, over a reduced ring, an ideal of $R[x]$ is essential exactly when the ideal
generated by all of its coefficients is essential in $R$. On the other hand, Krasula
proved in \cite{Krasula2022} that every quotient of $R[x]$ by an essential ideal is Artinian exactly when $R$ is reduced Artinian. These results, however, do not give an ideal-by-ideal criterion for essentiality in polynomial rings with arbitrary coefficient rings, particularly in the presence of nilpotents.

The inspiration for the present paper is that aggregate content no longer determines essentiality beyond the reduced setting. Recall that, for $f\in P$, the content $\ccont(f)$ of $f$ is the ideal generated by its coefficients. Although polynomial content is
the basic tool for the `Ohm--Rush theory' in \cite{OhmRush1972}, aggregate content does not keep the arrangement of coefficients within individual polynomials. We thus investigate criteria for essentiality of ideals in $P$ beyond the setting of a reduced ring of coefficients. We show that McCoy's theorem from \cite{McCoy1942} (see also
\cite{McCoy1957}) can be employed to yield an efficient test class of polynomials that detects the essentiality of ideals in $P$. More precisely, by McCoy's theorem, a nonzero
$f\in P$ is a zero divisor precisely when $\Ann_R(\ccont(f))\ne0$. And this characterization of zero divisors is precisely what we need to form a test class of polynomials $f$ to check against an ideal $E$ of $P$.

Assuming further properties on the ring $R$ of coefficients, we can then strengthen the result of characterization of essential ideals in $P=R[(X_\lambda)_{\lambda\in\Lambda}]$. For Noetherian coefficients, in Subsection~\ref{subsec:noeth}, we can greatly reduce the test class of polynomials $f$ using the associated primes of $R$ seen as an $R$-module. Then, in Subsection~\ref{subsec:s1}, we replace associated primes by minimal primes under Serre's condition $(S_1)$. This can be immediately applied to yield stronger characterization results for essential ideals in polynomial rings over a ring of coefficients that is either complete intersection, Noetherian Gorenstein, Noetherian Cohen--Macaulay or Noetherian reduced. For Artinian coefficients, in Section~\ref{sec:artinian}, we show that essential ideals in $P$ are determined by the socle. We then sharpen this in the case of Gorenstein Artinian coefficients. As an application, we present the case of polynomials over $\mathbb Z/n\mathbb Z$. Finally, in Section~\ref{sec:excellent}, we characterize essential ideals of polynomial rings over ideal-adic completions of excellent rings.

\section{McCoy tests and associated primes}\label{sec:general}





Our first theorem gives the basic criterion for essentiality of ideals in polynomial rings $P=R[(X_\lambda)_{\lambda\in\Lambda}]$, on which the subsequent refinements are based.

Note that, since every nonzero ideal of $P$ contains a nonzero principal ideal, we can always determine the essentiality of an ideal $E$ of $P$ by checking the intersections of $E$ with all principal ideals $(f)$ in $P$. We show that McCoy's theorem yields an efficient test class of polynomials $f$ to check against $E$ to determine whether $E$ is essential. 

\begin{theorem}\label{theorgencomm}
    Let $R$ be any commutative ring. Consider $E$ an ideal of $P = R[\{x_\lambda\}]$. The following are equivalent:
    \begin{enumT}
        \item $E$ is an essential ideal of $P$;
        \item $E\ne 0$ and $E\cap (f)\ne 0$ for every nonzero $f\in P$ such that $\Ann_R\ccont(f)\ne0$.
    \end{enumT}
\end{theorem}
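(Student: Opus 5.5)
The direction (i)$\Rightarrow$(ii) is immediate. Since $P\neq 0$, essentiality of $E$ applied to $J=P$ gives $E=E\cap P\neq 0$. Applied to $J=(f)$ for any nonzero $f$, it gives $E\cap(f)\neq 0$, and in particular this holds for those $f$ with $\Ann_R\ccont(f)\neq 0$.

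For (ii)$\Rightarrow$(i), I would reduce to principal ideals. As remarked before the statement, every nonzero ideal $J$ of $P$ contains a nonzero principal ideal $(f)$, so it suffices to show $E\cap(f)\neq 0$ for every nonzero $f\in P$. I would split into two cases according to McCoy's theorem. Note that $f$ involves only finitely many variables, so McCoy's theorem applies inside $R[x_{\lambda_1},\dots,x_{\lambda_n}]$. It also transfers to $P$: a nonzero annihilator of $f$ in $P$ can be specialized, by setting all extra variables to $0$ after picking a nonzero coefficient appropriately, or more simply the criterion "$f$ is a zero divisor iff $\Ann_R\ccont(f)\neq 0$" holds in $P$ directly. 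This is because a nonzero $r\in R$ with $r\ccont(f)=0$ kills $f$, and conversely McCoy's theorem for finitely many variables gives such an $r$.

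\emph{Case 1:} $\Ann_R\ccont(f)\neq 0$. Then $E\cap(f)\neq 0$ directly by hypothesis (ii).

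\emph{Case 2:} $\Ann_R\ccont(f)=0$. Then by McCoy's theorem $f$ is a nonzerodivisor of $P$. Pick any nonzero $e\in E$, which exists since $E\neq 0$. Then $fe\neq 0$, and $fe\in E\cap(f)$. Hence $E\cap(f)\neq 0$.

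Combining the two cases, $E\cap J\supseteq E\cap(f)\neq 0$ for every nonzero ideal $J$, so $E$ is essential.

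The only delicate point is the use of McCoy's theorem in Case 2 for a polynomial ring in possibly infinitely many variables. I would handle it as follows. Suppose $fg=0$ with $g\neq 0$. Let $x_{\lambda_1},\dots,x_{\lambda_n}$ be the finitely many variables appearing in $f$ or $g$. Then $fg=0$ already holds in the subring $R[x_{\lambda_1},\dots,x_{\lambda_n}]$. The classical multivariable McCoy theorem then yields a nonzero $r\in R$ with $rf=0$, that is, $r\in\Ann_R\ccont(f)$, contradicting the case assumption. Everything else is routine.
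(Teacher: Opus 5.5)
Your proposal is correct and follows the paper's proof essentially step for step. The paper likewise handles regular $f$ by multiplying by a nonzero $e\in E$, and identifies zero divisors via McCoy's criterion $\Ann_R\ccont(f)\neq 0$. Your reduction to finitely many variables, which uses the variables of both $f$ and the annihilating $g$, makes more explicit the paper's brief remark that each polynomial involves only finitely many indeterminates.
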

\begin{proof}
Of course $(i)\aR{}(ii)$. We prove the converse. As we said above, $E$ is essential if and only if $E\ne 0$ and $E\cap (f)\ne 0$ for every nonzero $f\in P$. Moreover, observe that it is sufficient to consider only the zero-divisor polynomials $f\in P$. Indeed, if
$E\ne0$ and $f\in P$ is regular (\textit{i.e.}, a non-zero-divisor), then any nonzero element $e\in E$ gives
\[
0\ne ef\in E\cap (f).
\] 

Now, we apply McCoy's theorem (see \cite{McCoy1942,McCoy1957}) to characterize zero-divisor polynomials $f\in P$. For every nonzero $f\in P$, we claim that
\begin{equation}\label{eq:mccoy-criterion}
f\text{ is a zero divisor in }S
\;\text{if and only if}\;
\Ann_R\ccont(f)\ne0.
\end{equation}
For the case of finitely many indeterminates, this is exactly McCoy's theorem. Then, notice that although the set $\Lambda$ of available indeterminates may be infinite, each polynomial is a finite $R$-linear combination of monomials, and each monomial involves only finitely many indeterminates. Thus,  (\ref{eq:mccoy-criterion}) holds and we conclude.
\end{proof}

\subsection{Noetherian coefficients}\label{subsec:noeth}

Assuming that the ring $R$ is Noetherian, we can strengthen the result of \thex\ref{theorgencomm}. The test class of polynomials $f\neq 0$ reduces to those such that $\Ann_R(\C(f))$ is an associated prime of $R$. Again, it will not be a problem to consider infinitely many indeterminates. This is because the scalar multiplier that we construct in the proof lives in $R$.

\begin{theorem}\label{theornoetherian}
    Let $R$ be a Noetherian ring. Consider $E$ an ideal of $P = R[\{x_\lambda\}]$. The following are equivalent:
    \begin{enumT}
        \item $E$ is an essential ideal of $P$;
        \item $E \neq \0$ and $E \cap (f) \neq \0$ for all $f\neq 0$ such that $\Ann_R(\C(f)) \in \Ass(R)$.
    \end{enumT}
\end{theorem}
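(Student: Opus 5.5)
The implication $(i)\aR{}(ii)$ is immediate. For the converse I would not use \thex\ref{theorgencomm}; it is enough to show directly that every nonzero $f\in P$ has a nonzero multiplier polynomial $rf$, with $r\in R$, lying in the test class of $(ii)$. Indeed, if $0\neq rf$ and $\Ann_R(\C(rf))\in\Ass(R)$, then $(ii)$ gives $0\neq E\cap(rf)\subseteq E\cap(f)$. Since every nonzero ideal of $P$ contains a nonzero principal ideal, this proves $E$ essential. The multiplier lives in $R$, so it does not matter how many indeterminates there are.

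\emph{Reduction to ideals.} Set $I=\C(f)$, a nonzero finitely generated ideal, say $I=(a_1,\dots,a_n)$ with the $a_i$ the coefficients of $f$. For $r\in R$ we have $\C(rf)=rI$, so $rf\neq 0$ if and only if $rI\neq 0$. Moreover
\[
\Ann_R(\C(rf))=\Ann_R(rI)=\bigcap_{i}\Ann_R(ra_i).
\]

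\emph{Choosing $r$.} Consider the family of ideals $\set{\Ann_R(rI)}{r\in R,\ rI\neq0}$. It is nonempty, since $r=1$ belongs to it. All its members are proper ideals, since $1\cdot rI\neq0$. Because $R$ is Noetherian, the family has a maximal element $J=\Ann_R(rI)$.

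\emph{$J$ is prime.} Suppose $ab\in J$ and $a\notin J$. Then $arI\neq0$, so $\Ann_R(arI)$ belongs to the family. It contains $J$, so by maximality it equals $J$. It also contains $b$, since $barI=0$. Hence $b\in J$, and $J$ is prime.

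\emph{$J$ is an associated prime.} We have $J=\bigcap_i\Ann_R(ra_i)$, a finite intersection, and each $\Ann_R(ra_i)$ contains $J$. A prime containing a finite intersection of ideals contains one of them, so $J\supseteq\Ann_R(ra_i)$ for some $i$. Hence $J=\Ann_R(ra_i)$, and this $ra_i$ is nonzero because $J$ is proper. Therefore $J\in\Ass(R)$, and $rf$ is the required nonzero polynomial in the test class.

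The main point needing care is this last step, namely passing from an annihilator of the finitely generated ideal $rI$ to the annihilator of a single element. The finite-intersection argument for primes handles it.
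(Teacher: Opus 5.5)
Your proof is correct and is essentially the paper's second proof, the ``more constructive'' one. It uses the same family $\{\Ann_R(rI) : r\in R,\ rI\neq 0\}$, the same maximality argument for primality, and the same finite-intersection step that identifies the maximal element with some $\Ann_R(ra_i)$. Your one deviation is to skip the reduction through Theorem~\ref{theorgencomm} (McCoy) and run the construction on every nonzero $f$; this is legitimate, because the construction never uses the hypothesis $\Ann_R\C(g)\neq 0$.
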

\begin{proof}
    Of course $(i)\aR{}(ii)$. We prove the converse. By \thex\ref{theorgencomm}, it suffices to prove that $E \cap (g) \neq \0$ for all $g\neq 0$ such that $\Ann_R(\C(g)) \neq 0$. So consider such a $g$. The strategy will be to construct, starting from $g$, a polynomial $f\in (g)$ that belongs to the test class described in $(ii)$.
    
    It is a well-known theorem that, for a Noetherian ring, every non-zero module has at least one associated prime. Since $R$ is Noetherian and $g$ only contains finitely many indeterminates, $g$ belongs to a polynomial subring $P'$ of $P$ that is Noetherian. Then the ideal $\langle g \rangle$ generated by $g$ in $P'$, which is contained in the ideal $(g)$ generated by $g$ in $P$, is a $P'$-module which must have at least one associated prime. So there exists a non-zero element $f\in \langle g \rangle \cont (g)$ such that $\Ann_{P'}(f)$ is a prime ideal of $P'$, and thus an associated prime of $P'$ (seen as a $P'$-module). Now, by \cite[1–7, Theorem 7 and Proposition 2.4, p.\ 4]{BH74} (see also \cite{BH74}), the associated primes of the polynomial ring $P'$ are exactly the extensions of the associated primes of $R$. Therefore, there exists some $\mathfrak{p} \in \text{Ass}(R)$ such that
    $$\Ann_{P'}(f) = \mathfrak{p}P'.$$

    We claim that then 
    $$\Ann_R(\C(f)) = \mathfrak{p}\in \Ass(R).$$
    Note that the content ideal $\C(f)$ does not depend on seeing $f$ as an element of $P'$ or of $P$. In order to prove the claim, consider $a\in \Ann_R(\C(f))$. Then $a$ is an element of $R$ that annihilates all coefficients of the polynomial $f$. Whence $a$, seen as an element of $P'$, annihilates $f$. Since $\Ann_{P'}(f)=\mathfrak{p}P'$ and $a\in R$, we conclude that $a\in \mathfrak{p}$. Starting instead with an element $r\in \mathfrak{p}$, we know that $r\in \Ann_{P'}(f)$ since $\Ann_{P'}(f)=\mathfrak{p}P$. But the only way in which $r\in R$, seen as an element of $P'$, can annihilate the polynomial $f$ is that $r$ annihilates the coefficients of $f$. Whence $r\in \Ann_R(\C(f))$.

    By the assumption $(ii)$, we obtain that
    $$E\cap (g)\contain E\cap (f)\neq \0.$$
    Whence we conclude that $E$ is an essential ideal of $P$.
\end{proof}

We also present a more constructive proof of \thex\ref{theornoetherian}. The idea comes from the hope that we can find a multiplier $b\in R$ such that $f:=bg\in (g)$ belongs to the test class described in $(ii)$ of \thex\ref{theornoetherian}. Note that the content ideals of $f$ and $g$ are then bound by the equation $\C(f)=b\C(g)$. 

\begin{proof}[Proof of \thex\ref{theornoetherian}, more constructively]
    Like in the proof above, start with $0\neq g\in P$ such that $\Ann_R(\C(g)) \neq 0$. We construct from $g$ a polynomial $f=b\cdot g$, with $b\in R$, that belongs to the test class described in $(ii)$.

    Consider the family of all annihilators of non-zero scalar multiples of the content ideal $\C(g)$:
    $$\mathcal{F}:=\set{\Ann_R(x \C(g))}{x\in R,\; x\C(g)\neq \0}$$
    Note that $\mathcal{F}$ is a non-empty family of ideals of $R$, as $\Ann_R(1\cdot \C(g))\in \mathcal{F}$. Then, since $R$ is Noetherian, $\mathcal{F}$ has a maximal element $\Ann_R(b\C(g))$ under inclusion.

    We prove that $\mathfrak{p}:=\Ann_R(b\C(g))$ is a prime ideal. Note first that $\mathfrak{p}\neq R$, because $1\notin \Ann_R(b\C(g))$, by construction of $\mathcal{F}$. Let $r,s\in R$ such that $rs\in \mathfrak{p}$ but $s\notin \mathfrak{p}$. By definition, $rsb\C(g)=\0$ but $sb\C(g)\neq \0$. But
    $$\mathfrak{p}=\Ann_R(b\C(g))\cont \Ann_R(sb\C(g))$$
    Thus, by maximality of $\mathfrak{p}$, the two annihilators must be equal. Since $r$ annihilates $sb\C(g)$, it must annihilate $b\C(g)$. Whence $r\in \mathfrak{p}$.

    We now prove that $\mathfrak{p}$ is an associated prime of $R$. Note that $\C(g)$ is finitely generated, by the non-zero coefficients $\{c_1, \ldots, c_k\}$ of $g$. Whence $b\C(g)=(bc_1,\ldots,bc_k)$. It follows that
    $$\mathfrak{p}=\Ann_R(b\C(g))=\bigcap_{i\in \{1,\ldots,k\}} \Ann_R(bc_i)$$
    Since $\mathfrak{p}$ is a prime ideal that is equal to a finite intersection of ideals, it must be equal to one of those ideals (see \cite[Proposition 1.11(ii), p.\ 8]{AtiyahMacdonald1969}). Say that $\mathfrak{p}=\Ann_R(bc_j)$. Moreover, $bc_j\neq 0$, since $\mathfrak{p}\neq R$. So $\mathfrak{p}\in \Ass(R)$.

    We conclude that $f=b\cdot g\in (g)$ is such that $f\neq 0$ (since $b\C(g)\neq \0)$ and
    $$\Ann_R(\C(f))=\Ann_R(b\C(g))=\mathfrak{p}\in \Ass(R).$$
    Whence $E\cap (g)\contain E\cap (f)\neq \0$.
\end{proof}

\begin{remark}\label{remarkcomparenoethclassgencomm}
    If $R$ is an integral domain, then the polynomial rings $P$ over $R$ are as well integral domains. And essential ideals of $P$ reduce to all non-zero ideals.

    So the theory developed in this paper is interesting when $R$ is not an integral domain. In these cases, the test class of polynomials $f$ described in condition $(ii)$ of \thex\ref{theornoetherian} is a subset of the test class described in condition $(ii)$ of \thex\ref{theorgencomm}. Indeed, $\0\notin \Ass(R)$ as $\0$ is not prime. In general, the test class described in condition $(ii)$ of \thex\ref{theornoetherian}, for Noetherian rings $R$, is a much smaller subset of the other test class. Note that for every Noetherian ring $R$, the set $\Ass(R)$ of associated primes of $R$ is finite.
\end{remark}

We can also observe the following, which further describes how the coefficients of the polynomials $f$ of the test class described in condition $(ii)$ of \thex\ref{theornoetherian} must be formed. Note that the assumption in the next proposition that $R$ is not an integral domain is not restrictive, as also explained in \remx\ref{remarkcomparenoethclassgencomm}.

\begin{proposition}\label{propcfcontained}
    Let $R$ be a Noetherian ring that is not an integral domain. Let $f\in P = R[\{x_\lambda\}]$ be a polynomial of the test class described in condition $(ii)$ of \thex\ref{theornoetherian}. That is, $f\neq 0$ and $\Ann_R(\C(f))\in \Ass(R)$. Then $f$ is such that $\C(f)\cont \mathfrak{p}$ for some associated prime ideal $\mathfrak{p}$ of $R$.
\end{proposition}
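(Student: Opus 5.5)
Let $\mathfrak q:=\Ann_R(\C(f))$, which is an associated prime of $R$ by hypothesis. The plan is to show that $\C(f)$ is contained in some associated prime of $R$ (possibly different from $\mathfrak q$), by showing that every element of $\C(f)$ is a zero divisor of $R$ and then using prime avoidance. In a Noetherian ring, the set of zero divisors is the union $\bigcup_{\mathfrak p\in\Ass(R)}\mathfrak p$, and $\Ass(R)$ is finite, as recalled in \remx\ref{remarkcomparenoethclassgencomm}.

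First, since $\mathfrak q$ is prime, $\mathfrak q\neq 0$ unless $0$ is prime. But $R$ is not an integral domain, so $0$ is not a prime ideal, and hence $\mathfrak q\neq 0$. Pick $0\neq a\in\mathfrak q$. Then $a$ annihilates every element of $\C(f)$, so each $c\in\C(f)$ satisfies $ac=0$ with $a\neq0$. Thus $\C(f)$ consists of zero divisors, i.e. $\C(f)\cont\bigcup_{\mathfrak p\in\Ass(R)}\mathfrak p$. Prime avoidance for the finitely many primes in $\Ass(R)$ (\cite[Proposition 1.11(i)]{AtiyahMacdonald1969}) then gives $\C(f)\cont\mathfrak p$ for some $\mathfrak p\in\Ass(R)$.

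There is an alternative, more explicit route that avoids the description of zero divisors. Since $\mathfrak q\in\Ass(R)$, write $\mathfrak q=\Ann_R(y)$ for some $y\in R$. The inclusion $\C(f)\cont\mathfrak q$ holds exactly when $\C(f)^2=0$, which need not happen, so one cannot simply take $\mathfrak p=\mathfrak q$. Instead, consider the nonzero ideal $\mathfrak q$. Because $R$ is Noetherian, its annihilator-type submodule $\mathfrak q$ contains an element $z$ with $\Ann_R(z)=\mathfrak p$ prime, and $\C(f)\cont\Ann_R(\mathfrak q)\cont\Ann_R(z)=\mathfrak p$. Here $z\in\mathfrak q$ is chosen via the fact that the nonzero module $\mathfrak q$ has an associated prime, and that prime is an associated prime of $R$. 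This second argument is cleaner and I would present it as the main proof.

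The only delicate point is confirming that $\mathfrak q\neq0$, which is exactly where the hypothesis that $R$ is not a domain is used. Everything else follows from standard Noetherian facts already invoked in the proof of \thex\ref{theornoetherian}.
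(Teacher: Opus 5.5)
Your proposal is correct. Your first argument is exactly the paper's proof. There, $\mathfrak q=\Ann_R(\C(f))\neq 0$ because $0$ is not prime in a non-domain, so every element of $\C(f)$ is a zero divisor. Hence $\C(f)\cont\bigcup_{\mathfrak p\in\Ass(R)}\mathfrak p$, and prime avoidance over the finite set $\Ass(R)$ finishes.

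Your second argument, the one you would present as the main proof, is a genuinely different and also valid route. Instead of describing the zero divisors of $R$ and invoking prime avoidance, you pick $z\in\mathfrak q$ with $\Ann_R(z)=\mathfrak p$ prime, so $\mathfrak p\in\Ass(R)$ by definition. You then use the inclusion reversal $\C(f)\cont\Ann_R(\mathfrak q)\cont\Ann_R(z)$.

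This route buys two things:
\begin{enumT}
\item It needs neither finiteness of $\Ass(R)$ nor prime avoidance, only the existence of such a $z$. That existence follows from the same maximal-annihilator argument the paper uses in its constructive proof of \thex\ref{theornoetherian}, so ACC on annihilators of elements would suffice.
\item It says more: $\C(f)$ lies in every associated prime of the $R$-module $\mathfrak q$.
\end{enumT}
The paper's route is shorter, given the standard description of the zero divisors of a Noetherian ring.

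Two small remarks. Both arguments use only $\mathfrak q\neq 0$, never that $\mathfrak q$ is prime, so the conclusion holds for the whole test class of \thex\ref{theorgencomm} when $R$ is Noetherian. Also, the line introducing $y$ with $\mathfrak q=\Ann_R(y)$ is never used and can be dropped.
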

\begin{proof}
    Call $\mathfrak{q}:=\Ann_R(\C(f))\in \Ass(R)$. Since $R$ is not an integral domain, then $\mathfrak{q}\neq \0$. So $\C(f)$ cannot contain a regular element, which means that $\C(f)\cont \operatorname{Z}(R)$. Since $R$ is Noetherian, the set $\operatorname{Z}(R)$ of zero divisors of $R$ coincides with the union of the associated primes of $R$. Moreover, this is a finite union, again thanks to the fact that $R$ is Noetherian. By the prime avoidance lemma, we obtain that $\C(f)$ is entirely contained inside one of the associated primes of $R$.
\end{proof}

\subsection{The $(S_1)$ refinement}\label{subsec:s1}

The Noetherian criterion is formulated in terms of all associated primes of $R$, and therefore includes contributions from embedded associated primes. Under Serre's condition $(S_1)$, we can strengthen the criterion to consider just minimal primes.

Recall that Serre's condition $(S_1)$ for a Noetherian ring is
\[
 \depth R_{\mathfrak p}\ge
 \min\{1,\dim R_{\mathfrak p}\}
 \qquad(\mathfrak p\in\Spec R).
\]
This condition precisely guarantees that the ring has no embedded associated primes: all associated primes are minimal. Indeed, see \cite[Theorem 4.5.2, p.\ 73, especially its proof]{HunekeSwanson2006}. And we can use this to sharpen the result of \thex\ref{theornoetherian}.

The results of this subsection directly also apply to all the following cases:
\begin{itemize}
    \item $R$ complete intersection;
    \item $R$ Noetherian Gorenstein;
    \item $R$ Noetherian Cohen--Macaulay;
    \item $R$ Noetherian reduced
\end{itemize}
Indeed, complete intersection and Noetherian Gorenstein rings are in particular Noetherian Cohen--Macaulay rings. Moreover, Noetherian Cohen--Macaulay rings satisfy ($S_1$) by \cite[discussion before Theorem 4.5.2, p.\ 73]{HunekeSwanson2006} and Noetherian reduced rings satisfy ($S_1$) by \cite[discussion before Theorem 4.5.2, p.\ 73]{HunekeSwanson2006}.

\begin{theorem}\label{theorconditionS1}
    Let $R$ be a Noetherian ring that satisfies Serre's condition (S1). Consider $E$ an ideal of $P = R[\{x_\lambda\}]$. The following are equivalent:
    \begin{enumT}
        \item $E$ is an essential ideal of $P$;
        \item $E \neq \0$ and $E \cap (f) \neq \0$ for all $f\neq 0$ such that $\Ann_R(\C(f))\in \Min(R)$.
    \end{enumT}
\end{theorem}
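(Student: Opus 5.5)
The plan is to reduce the statement directly to \thex\ref{theornoetherian}. The implication $(i)\aR{}(ii)$ holds trivially, since an essential ideal meets every nonzero principal ideal. For the converse, assume $(ii)$. By \thex\ref{theornoetherian}, it is enough to show that $E\cap(f)\neq\0$ for every $f\neq0$ with $\Ann_R(\C(f))\in\Ass(R)$. This follows once we know that every such $f$ already lies in the test class of $(ii)$, that is, that $\Ass(R)\cont\Min(R)$.

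The key step is therefore the inclusion $\Ass(R)\cont\Min(R)$ for a Noetherian ring satisfying $(S_1)$, which is exactly the absence of embedded associated primes. As recalled before the statement, this is \cite[Theorem 4.5.2]{HunekeSwanson2006}. To make the argument self-contained, I would add a short check. Let $\mathfrak p\in\Ass(R)$. Then $\mathfrak pR_{\mathfrak p}\in\Ass(R_{\mathfrak p})$ because associated primes localize, so every element of $\mathfrak pR_{\mathfrak p}$ is a zero divisor and $\depth R_{\mathfrak p}=0$. Condition $(S_1)$ then forces $\min\{1,\dim R_{\mathfrak p}\}=0$, so $\dim R_{\mathfrak p}=0$. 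Hence no prime lies strictly below $\mathfrak p$, and $\mathfrak p$ is minimal. The reverse inclusion $\Min(R)\cont\Ass(R)$ holds in any Noetherian ring, so in fact $\Ass(R)=\Min(R)$, and the test classes in $(ii)$ of \thex\ref{theornoetherian} and of \thex\ref{theorconditionS1} coincide.

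Putting this together: given $(ii)$ and any $f\neq0$ with $\Ann_R(\C(f))\in\Ass(R)$, we get $\Ann_R(\C(f))\in\Min(R)$, so $E\cap(f)\neq\0$ by hypothesis. \thex\ref{theornoetherian} then gives that $E$ is essential. Infinitely many indeterminates cause no trouble, since this is already handled in \thex\ref{theornoetherian}.

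The only nontrivial step is the identification $\Ass(R)=\Min(R)$ under $(S_1)$, specifically the depth-zero argument at a localized associated prime. Beyond that, the proof is a direct substitution into the earlier theorem.
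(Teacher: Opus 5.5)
Your proposal is correct and follows the paper's proof: both reduce to Theorem~\ref{theornoetherian} using $\Ass(R)=\Min(R)$ under $(S_1)$, which the paper cites from Huneke--Swanson. Your depth-zero argument at the localization $R_{\mathfrak p}$ is a valid self-contained justification of the inclusion $\Ass(R)\subseteq\Min(R)$, which is the only direction needed.
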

\begin{proof}
    It is a well-known result that, if $R$ satisfies Serre's condition (S1), then
    $$\Ass(R)=\Min(R).$$
    See for example \cite[Theorem 4.5.2, p.\ 73, especially its proof]{HunekeSwanson2006}. We then conclude applying \thex\ref{theornoetherian}.
\end{proof}

Similarly to the general Noetherian case, we can then further describe how the coefficients of the polynomials $f$ of the test class described in condition $(ii)$ of \thex\ref{theorconditionS1} must be formed.

\begin{proposition}
    Let $R$ be a Noetherian ring that satisfies Serre's condition (S1) and is not an integral domain. Let $f\in P = R[\{x_\lambda\}]$ be a polynomial of the test class described in condition $(ii)$ of \thex\ref{theorconditionS1}. That is, $f\neq 0$ and $\Ann_R(\C(f))\in \Min(R)$. Then $f$ is such that $\C(f)\cont \mathfrak{p}$ for some minimal prime ideal $\mathfrak{p}$ of $R$.
\end{proposition}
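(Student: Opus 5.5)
The plan is to reduce the statement to Proposition~\ref{propcfcontained}, using the equality $\Ass(R)=\Min(R)$ that holds under $(S_1)$. I then want to sharpen the conclusion from ``some associated prime'' to ``some minimal prime''. The hypotheses fit: $R$ is Noetherian and not an integral domain, and $f\neq 0$ with $\mathfrak{q}:=\Ann_R(\C(f))\in\Min(R)\cont\Ass(R)$. So $f$ lies in the test class of condition $(ii)$ of \thex\ref{theornoetherian}, and Proposition~\ref{propcfcontained} applies directly.

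Concretely, I will rerun the argument of Proposition~\ref{propcfcontained} in the present setting. First I check that $\mathfrak{q}\neq\0$. If $\mathfrak{q}=\0$ were a minimal prime, then $\0$ would be prime, contradicting the assumption that $R$ is not an integral domain. Hence some nonzero $a\in R$ annihilates every coefficient of $f$. This means that every element of $\C(f)$ is a zero divisor, so $\C(f)\cont\operatorname{Z}(R)$.

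Next I use the Noetherian hypothesis: $\operatorname{Z}(R)$ is the finite union $\bigcup_{\mathfrak{p}\in\Ass(R)}\mathfrak{p}$. By the well-known consequence of $(S_1)$ cited in the proof of \thex\ref{theorconditionS1} (\cite[Theorem 4.5.2]{HunekeSwanson2006}), $\Ass(R)=\Min(R)$, so $\operatorname{Z}(R)=\bigcup_{\mathfrak{p}\in\Min(R)}\mathfrak{p}$, and this is still a finite union. Finally, prime avoidance applied to the ideal $\C(f)$ contained in this finite union of primes gives $\C(f)\cont\mathfrak{p}$ for a single $\mathfrak{p}\in\Min(R)$.

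There is essentially no obstacle here; the only point needing care is to ensure the union in prime avoidance ranges over minimal primes, which is exactly what $\Ass(R)=\Min(R)$ provides. As an alternative direct route, one could note that $\C(f)\cdot\mathfrak{q}=0\cont\mathfrak{q}$ and $\mathfrak{q}\not\cont\mathfrak{q}'$ for some prime $\mathfrak{q}'$. However, taking $\mathfrak{p}=\mathfrak{q}$ itself fails in general, since $\C(f)$ need not lie in $\Ann(\C(f))$. So I will stick with the prime avoidance argument above.
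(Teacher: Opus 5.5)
Your proposal is correct and is essentially the paper's proof. The paper notes that $(S_1)$ gives $\Ass(R)=\Min(R)$ and then invokes Proposition~\ref{propcfcontained}, whose argument is exactly what you rerun: a nonzero prime annihilator forces $\C(f)\cont\operatorname{Z}(R)=\bigcup_{\mathfrak{p}\in\Ass(R)}\mathfrak{p}$, and prime avoidance then applies with the finite union now ranging over $\Min(R)$. Your side remark that $\mathfrak{p}=\Ann_R(\C(f))$ itself can fail is also right (e.g.\ $R=k\times k$ and $f=(1,0)$).
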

\begin{proof}
    Since $R$ satisfies Serre's condition (S1), $\Ass(R)=\Min(R)$. Then the result follows from \prox\ref{propcfcontained}.
\end{proof}

\section{Artinian coefficients: socle and Gorenstein tests}\label{sec:artinian}

Assuming that the ring $R$ is Artinian, we can use the socle $\Soc(R)$ of $R$ to find a convenient test class of polynomials $f$ to check essentiality of ideals of $P = R[\{x_\lambda\}]$. We obtain that it just suffices to look at those polynomials $f$ whose coefficients are entirely generated by the socle of $R$. The key idea behind this criterion is that the Jacobson radical of an Artinian ring is nilpotent.

\begin{theorem}\label{theorartinian}
    Let $R$ be an Artinian ring. Consider $E$ an ideal of $P = R[\{x_\lambda\}]$. The following are equivalent:
    \begin{enumT}
        \item $E$ is an essential ideal of $P$;
        \item $E \neq \0$ and $E \cap (f) \neq \0$ for all $f\neq 0$ such that $\C(f)\cont \Soc(R)$.        
    \end{enumT}
\end{theorem}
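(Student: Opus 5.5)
The implication $(i)\aR{}(ii)$ is immediate, so the work is in $(ii)\aR{}(i)$. I would follow the pattern of the constructive proof of \thex\ref{theornoetherian}. Take any nonzero $g\in P$. Since a nonzero ideal of $P$ contains a nonzero principal ideal, it suffices to find some $b\in R$ such that $f:=bg$ is nonzero and satisfies $\C(f)=b\,\C(g)\cont\Soc(R)$. Then hypothesis $(ii)$ gives
\[
E\cap(g)\contain E\cap(f)\neq\0.
\]
Because the multiplier lives in $R$, infinitely many indeterminates cause no trouble. I will not even need \thex\ref{theorgencomm}: a regular $g$ is handled by the same argument.

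The key input is that the Jacobson radical $J=J(R)$ of an Artinian ring is nilpotent, say $J^n=\0$. I also use that $\Soc(R)=\Ann_R(J)$. To see this, recall that an Artinian ring is a finite product of Artinian local rings $(R_i,\mathfrak m_i)$. The socle of each $R_i$ is $\Ann_{R_i}(\mathfrak m_i)$, the sum of its simple submodules, since a simple module over a local ring is killed by $\mathfrak m_i$. This compatibility with the product decomposition gives $\Soc(R)=\Ann_R(J)$.

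Now consider the chain of ideals
\[
\C(g)\contain J\C(g)\contain J^2\C(g)\contain\cdots\contain J^n\C(g)=\0.
\]
Let $k$ be the largest index with $J^k\C(g)\neq\0$; it exists because $\C(g)\neq\0$ and $k=0$ is allowed. Then $J\cdot J^k\C(g)=\0$, so $J^k\C(g)\cont\Ann_R(J)=\Soc(R)$.

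The remaining issue is that $J^k\C(g)$ is an ideal rather than a single multiple $b\,\C(g)$. To fix this, note that $J^k$ is generated by finitely many products $y$ of elements of $J$. Hence some such $y\in J^k$ has $y\,\C(g)\neq\0$. Any $y\in J^k$ satisfies $y\,\C(g)\cont J^k\C(g)\cont\Soc(R)$. Setting $b:=y$ and $f:=bg$, we get $f\neq\0$, because $\C(f)=b\,\C(g)\neq\0$, and $\C(f)\cont\Soc(R)$, as required. Alternatively, one can argue elementwise: in $\C(g)$ choose $b$ to be a product of elements of $J$ of maximal length with $b\,\C(g)\neq\0$, which is bounded by nilpotency.

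The only step needing care is the identification $\Soc(R)=\Ann_R(J)$ for a non-local Artinian $R$. I would handle it through the product decomposition as above, or directly: a simple module $R/\mathfrak m$ is annihilated by $J$, and conversely $\Ann_R(J)$ is an $R/J$-module, hence semisimple since $R/J$ is a finite product of fields.
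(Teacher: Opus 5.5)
Your proof is correct and follows the paper's argument. You choose the largest $k$ with $J^k\C(g)\neq\0$, pick $m\in J^k$ with $m\C(g)\neq\0$, and use $\Soc(R)=\Ann_R(J)$ to get $\C(mg)\cont\Soc(R)$. The only differences are minor: you skip the reduction via \thex\ref{theorgencomm}, which is legitimate since the construction never uses $\Ann_R\C(g)\neq\0$, and you prove $\Soc(R)=\Ann_R(J)$ yourself instead of citing it.
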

\begin{proof}
    Of course $(i)\aR{}(ii)$. We prove the converse. By \thex\ref{theorgencomm}, it suffices to prove that $E \cap (g) \neq \0$ for all $g\neq 0$ such that $\Ann_R(\C(g)) \neq 0$. So consider such a $g$. The strategy will be to construct, starting from $g$, a polynomial $f\in (g)$ that belongs to the test class described in $(ii)$.
    
    Since $R$ is Artinian, by \cite[Theorem 4.12, p.\ 54]{Lam01}, its Jacobson radical $J$ is a nilpotent ideal. Then there exists an integer $k\ge 0$ such that $J^k \C(g)\neq \0$ but $J^{k+1} \C(g)=\0$. Indeed, $\C(g)\neq 0$ since $g\neq 0$, and some power of $J$ is guaranteed to be $\0$. We can thus pick an element $m\in J^k$ such that $m \C(g)\neq \0$. And we obtain that the polynomial $f:= mg$ is non-zero.

    Now, we observe that 
    $$J \C(f)= J m \C(g) \cont J^{k+1}\C(g) = \0.$$
    Which means that the coefficients of the polynomial $f$ are annihilated by the Jacobson radical $J$ of $R$.

    By \cite[Proposition 18.39]{Fai76}, in an Artinian ring, the set of elements annihilated by the Jacobson radical is exactly the socle. So we obtain that the coefficients of the polynomial $f$ must belong to $\Soc(R)$. That is, $\C(f)\cont \Soc(R)$.

    By the assumption $(ii)$, then,
    $$E\cap (g) \contain E\cap (f)\neq 0.$$
    Whence we conclude that $E$ is an essential ideal of $P$.
\end{proof}

\begin{remark}\label{remcompareconditionsii}
    \thex\ref{theorartinian} could also be derived from \thex\ref{theornoetherian}, using that every Artinian ring is in particular Noetherian. The main ingredients of such a proof are the following two properties of Artinian rings $R$:
    \begin{enumerate}
        \item associated primes precisely coincide with the maximal ideals, i.e.\
        $$\Ass(R)=\Max(R)$$
        (see \cite[Theorem 8.5, p.\ 90]{AtiyahMacdonald1969} and \cite[Theorem 6.5, p.\ 39]{Matsumura1986});
        \item the socle is built using the annihilators of the maximal ideals, as follows:
        $$\Soc(R)=\bigoplus_{\mathfrak{m}\in \Max(R)}\Ann_R(\mathfrak{m}).$$
        and this union is finite (see \cite[Theorem 4.3.7(i), p.\ 95]{Coh03});
    \end{enumerate}
\end{remark}

If we further know that the ring $R$ is Artinian local, the condition described in $(ii)$ of \thex\ref{theorartinian} becomes even easier to check. Indeed, we have the following result.

\begin{corollary}\label{corollartinianlocal}
    Let $R$ be an Artinian local ring, and call $\mathfrak{m}$ its unique maximal ideal. Consider $E$ an ideal of $P = R[\{x_\lambda\}]$. The following are equivalent:
    \begin{enumT}
        \item $E$ is an essential ideal of $P$;
        \item $E \neq \0$ and $E \cap (f) \neq \0$ for all $f\neq 0$ such that $\C(f)\cont \Ann_R(\mathfrak{m})$.     
    \end{enumT}
\end{corollary}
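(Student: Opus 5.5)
This follows directly from \thex\ref{theorartinian}: I will show that for an Artinian local ring $(R,\mathfrak{m})$ we have $\Soc(R)=\Ann_R(\mathfrak{m})$. Once this holds, conditions $(ii)$ of the two statements describe the same test class of polynomials, so the equivalence of $(i)$ and $(ii)$ carries over verbatim.

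To identify the socle, I will use the fact quoted in the proof of \thex\ref{theorartinian} from \cite[Proposition 18.39]{Fai76}: in an Artinian ring, the socle is exactly the set of elements annihilated by the Jacobson radical $J$. Since $R$ is local, $J=\mathfrak{m}$, and therefore $\Soc(R)=\Ann_R(J)=\Ann_R(\mathfrak{m})$. Alternatively, \remx\ref{remcompareconditionsii}(2) gives $\Soc(R)=\bigoplus_{\mathfrak{n}\in\Max(R)}\Ann_R(\mathfrak{n})$, and here $\Max(R)=\{\mathfrak{m}\}$, so this sum has the single summand $\Ann_R(\mathfrak{m})$.

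For a self-contained argument avoiding the citation, I can check both inclusions directly. The socle is the sum of the minimal ideals of $R$. Each minimal ideal is a simple $R$-module, isomorphic to $R/\mathfrak{m}$, and is therefore killed by $\mathfrak{m}$. Conversely, $\Ann_R(\mathfrak{m})$ is an $R/\mathfrak{m}$-vector space whose subspaces are exactly its $R$-submodules. Each nonzero $x\in\Ann_R(\mathfrak{m})$ spans the one-dimensional subspace $Rx\cong R/\mathfrak{m}$, which is a minimal ideal, so $x\in\Soc(R)$.

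No step here is a real obstacle. The only point needing care is identifying the socle with the annihilator of $J=\mathfrak{m}$. After that, substituting $\Ann_R(\mathfrak{m})$ for $\Soc(R)$ in \thex\ref{theorartinian}$(ii)$ gives the corollary.
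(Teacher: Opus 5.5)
Your proposal is correct and matches the paper's proof: the paper also identifies $\Soc(R)=\Ann_R(\mathfrak{m})$ for an Artinian local ring via \cite[Proposition 18.39]{Fai76} (or \cite[Theorem 4.3.7(i), p.\ 95]{Coh03}) and then applies Theorem~\ref{theorartinian}. Your self-contained check of the two inclusions is also sound, and it does not use the Artinian hypothesis, so it holds for any commutative local ring.
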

\begin{proof}
    Since $R$ is Artinian local, by \cite[Proposition 18.39]{Fai76} (or also by \cite[Theorem 4.3.7(i), p.\ 95]{Coh03}), its socle precisely coincides with the annihilator of its unique maximal ideal. We then conclude by applying \thex\ref{theorartinian}.
\end{proof}

Furthermore, when $R$ is Artinian local, we can strengthen \remx\ref{remcompareconditionsii}. We obtain that the test class of polynomials described in condition $(ii)$ of \thex\ref{theornoetherian} precisely coincides with the test class described in condition $(ii)$ of \corx\ref{corollartinianlocal}.

\begin{proposition}
    Let $R$ be an Artinian local ring, and call $\mathfrak{m}$ its unique maximal ideal. Consider $0\neq f\in P = R[\{x_\lambda\}]$. The following are equivalent:
    \begin{enumT}
        \item $\Ann_R(\C(f))\in \Ass(R)$;
        \item $\C(f)\cont \Ann_R(\mathfrak{m})$.
    \end{enumT}
\end{proposition}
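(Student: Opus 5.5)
Since $R$ is Artinian local, $\Ass(R)=\Max(R)=\{\mathfrak m\}$, as recalled in \remx\ref{remcompareconditionsii}(1). So condition $(i)$ says exactly that $\Ann_R(\C(f))=\mathfrak m$. The plan is to prove both implications directly from this reformulation, using only the facts that $f\neq 0$ forces $\C(f)\neq \0$ and that $\mathfrak m$ is the unique maximal ideal.

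For $(i)\aR{}(ii)$: if $\Ann_R(\C(f))=\mathfrak m$, then $\mathfrak m\,\C(f)=\0$. Hence every coefficient of $f$ is annihilated by $\mathfrak m$, that is, $\C(f)\cont \Ann_R(\mathfrak m)$.

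For $(ii)\aR{}(i)$: if $\C(f)\cont\Ann_R(\mathfrak m)$, then $\mathfrak m\,\C(f)=\0$, so $\mathfrak m\cont \Ann_R(\C(f))$. Since $f\neq 0$, some coefficient of $f$ is nonzero, so $1\notin\Ann_R(\C(f))$ and $\Ann_R(\C(f))$ is a proper ideal. As $\mathfrak m$ is maximal and contained in this proper ideal, we get $\Ann_R(\C(f))=\mathfrak m\in\Ass(R)$.

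The only nontrivial input is the identification $\Ass(R)=\{\mathfrak m\}$. It also follows directly: every associated prime of $R$ is a prime ideal, hence maximal because $R$ is Artinian, hence equal to $\mathfrak m$; and $\Ass(R)\neq\emptyset$ because $R$ is Noetherian and nonzero. So no step is a genuine obstacle. The one point needing care is the properness of $\Ann_R(\C(f))$, which is exactly where the hypothesis $f\neq 0$ is used.
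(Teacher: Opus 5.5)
Your proposal is correct and follows essentially the same route as the paper: reduce $(i)$ to $\Ann_R(\C(f))=\mathfrak m$ via $\Ass(R)=\Max(R)=\{\mathfrak m\}$, get $(i)\Rightarrow(ii)$ from $\mathfrak m\,\C(f)=\0$, and get $(ii)\Rightarrow(i)$ from $\mathfrak m\cont\Ann_R(\C(f))$, properness (using $f\neq 0$), and maximality. Your direct justification of $\Ass(R)=\{\mathfrak m\}$ (primes are maximal in an Artinian ring, and $\Ass(R)\neq\emptyset$ for a nonzero Noetherian ring) is a small self-contained substitute for the paper's citation of its Remark on Artinian rings.
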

\begin{proof}
    By \remx\ref{remcompareconditionsii}, we know that
    $$\Ass(R)=\Max(R).$$
    
    We prove $(i)\aR{}(ii)$. Call $\mathfrak{m}=\Ann_R(\C(f))\in \Max(R)$. Then $\mathfrak{m}\C(f)=\0$, whence $\C(f)\cont \Ann_R(\mathfrak{m})$.

    We prove $(ii)\aR{}(i)$. Note that $\C(f)\mathfrak{m}=\0$, whence $\mathfrak{m}\cont \Ann_R(\C(f))$. Since $f\neq 0$, we have that $\C(f)\neq 0$ and thus that $\Ann_R(\C(f))$ is a proper ideal. By maximality of $\mathfrak{m}$, we conclude that $\Ann_R(\C(f))=\mathfrak{m}\in \Ass(R)$.
\end{proof}

\subsection{Gorenstein factors and modular coefficients}\label{subsec:gorenstein}

Assuming that the ring $R$ is Gorenstein Artinian local, condition $(ii)$ in \corx\ref{corollartinianlocal} simplifies more. This is thanks to the fact that in a Gorenstein Artinian local ring the socle is a 1-dimensional vector space over the residue field; see \cite[Proposition 21.5(c), p.\ 526]{Eisenbud1995}. The family of socle-content polynomials from
Corollary~\ref{corollartinianlocal} can therefore be replaced by a single principal ideal.

\begin{theorem}\label{theorgorensteinartinianlocal}
    Let $R$ be a Gorenstein Artinian local ring, and call $\mathfrak{m}$ its unique maximal ideal. Call then $s$ the generator that exhibits $\Soc(R)$ as a 1-dimensional vector space over the residue field $\kappa = R/\mathfrak{m}$.
    
    Consider $E$ an ideal of $P = R[\{x_\lambda\}]$. The following are equivalent:
    \begin{enumT}
        \item $E$ is an essential ideal of $P$;
        \item $E \cap s P \neq \0$.
    \end{enumT}
\end{theorem}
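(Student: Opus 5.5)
We prove both implications, reducing the converse to \corx\ref{corollartinianlocal}.

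For $(i)\aR{}(ii)$ it suffices to see that $sP\neq\0$. Indeed $s\neq 0$, since it generates the nonzero socle (the socle of a nonzero Artinian local ring is nonzero). So $sP$ is a nonzero ideal of $P$, and essentiality of $E$ gives $E\cap sP\neq\0$.

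For $(ii)\aR{}(i)$, condition $(ii)$ first gives $E\neq\0$. By \corx\ref{corollartinianlocal} it then suffices to show $E\cap(f)\neq\0$ for every $f\neq 0$ with $\C(f)\cont\Ann_R(\mathfrak{m})=\Soc(R)$. The key step is to prove $(f)\contain sP$ for every such $f$; then $E\cap(f)\contain E\cap sP\neq\0$ and we are done.

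To prove $(f)\contain sP$, use that $\Soc(R)=\kappa s=Rs$, since $\mathfrak{m}s=0$. Every coefficient of $f$ therefore has the form $u s$ with $u\in R$, so $f=s\,h$ for some $h\in P$. Because $f\neq 0$, some coefficient $u_0 s$ of $f$ is nonzero, which forces $u_0\notin\mathfrak{m}$, so $u_0$ is a unit of $R$. It does not follow that $h$ is a unit of $P$. Instead, write $h=u_0x^{\alpha}+h'$, where $x^\alpha$ is the monomial carrying the coefficient $u_0$. Only the residues of the coefficients of $h$ modulo $\mathfrak{m}$ matter, because $s\mathfrak{m}=0$. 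Hence $sP\cong(R/\mathfrak{m})[\{x_\lambda\}]=\kappa[\{x_\lambda\}]$ as $P$-modules, via $sp\mapsto\bar p$. This map is well defined and injective since $\Ann_P(s)=\mathfrak{m}P$.

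Under this identification $(f)=s\,hP$ corresponds to the ideal $\bar h\,\kappa[\{x_\lambda\}]$, with $\bar h\neq 0$. The inclusion $(f)\contain sP$ would require $\bar h$ to be a unit. That fails in general, for example when $f=sx$.

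So the main obstacle is that $(f)\not\supseteq sP$ in general, and the argument must be repaired. The natural fix is to compare submodules of the domain $sP\cong\kappa[\{x_\lambda\}]$ instead of asking for containment. Any two nonzero submodules $I,I'$ of $sP$ meet nontrivially, since for nonzero $a\in I$ and $b\in I'$ we have $0\neq ab\in I\cap I'$, using that $\kappa[\{x_\lambda\}]$ is a domain.

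Apply this with $I=E\cap sP$, nonzero by $(ii)$, and $I'=(f)\cont sP$, nonzero since $f\neq 0$. Both are $P$-submodules of $sP$, hence correspond to nonzero ideals of the domain $\kappa[\{x_\lambda\}]$, and therefore
$$E\cap(f)\contain (E\cap sP)\cap(f)\neq\0.$$
The containment $(f)\cont sP$ holds because $f=sh$.

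The step needing care is therefore the identification $sP\cong\kappa[\{x_\lambda\}]$, namely checking $\Ann_P(s)=\mathfrak{m}P$. This is the same coefficientwise argument used in the proof of \thex\ref{theornoetherian}: $sp=0$ if and only if every coefficient of $p$ lies in $\Ann_R(s)=\mathfrak{m}$. With this in place, \corx\ref{corollartinianlocal} concludes that $E$ is essential.
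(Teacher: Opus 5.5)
Your final argument is correct and essentially matches the paper's proof. The paper also reduces to \corx\ref{corollartinianlocal}, writes $f=sh$, and uses that $P/\mathfrak{m}P\iso\kappa[\{x_\lambda\}]$ is a domain together with $\Ann_R(s)=\mathfrak{m}$; its explicit witness $eh=gf\neq 0$ (for $e=sg\in E\cap sP$) is exactly the product $\bar g\bar h$ of your two nonzero elements under the identification $sP\iso P/\mathfrak{m}P$. In a write-up, drop the abandoned attempt to prove $(f)\contain sP$, which is false as you note, and go straight to the intersection of nonzero ideals in the domain.
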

\begin{proof}
    Of course $(i)\aR{}(ii)$. We prove the converse. By \corx\ref{corollartinianlocal}, it suffices to prove that $E \cap (f) \neq \0$ for all $f\neq 0$ such that $\C(f)\cont \Ann_R(\mathfrak{m})=\Soc(R)$. 
    
    So consider such an $f$. Notice that $\Soc(R)$ is generated by $s$ as an ideal of $R$. This is thanks to the fact that the scalar multiplication of $\Soc(R)$ as a vector space over the residue field $R/\mathfrak{m}$ is induced by the scalar multiplication of $\Soc(R)$ as an $R$-module, using that $\mathfrak{m}\Soc(R)=\0$. Since $\C(f)\cont \Soc(R)=(s)$, all coefficients of $f$ are multiples of $s$. Thus, $f\in sP$; say that $f=s\cdot h$ with $h\in P$. Then, by assumption, we can take a non-zero element $e\in E\cap sP$. This means that there exists $g\in P$ such that $e=s\cdot g$, and $s\cdot g\neq 0$. 
    
    We claim that $0\neq eh\in E\cap (f)$. Of course $eh\in E$ since $E$ is an ideal. Moreover, 
    $$eh=sgh=gf\in (f).$$
    It remains to prove that $eh\neq 0$.

    For this, we look at the classes of $g$ and $h$ in the quotient of $P$ modulo $\mathfrak{m}P$. We have that
    $$P/\mathfrak{m}P\iso (R/\mathfrak{m})[\{x_\lambda\}]=\kappa[\{x_\lambda\}],$$
    whence $P/\mathfrak{m}P$ is an integral domain (since $\kappa$ is a field) and the projection $P\to P/\mathfrak{m}P$ coincides with taking the classes in $R/\mathfrak{m}$ of all coefficients of the starting polynomial. We observe that $\overline{g}\neq 0$ in $P/\mathfrak{m}P$. Indeed, using that $\Ann_R(\mathfrak{m})=\Soc(R)=(s)$ and thus $s\mathfrak{m}=\0$, the fact that $s\cdot g\neq 0$ guarantees that there is at least one coefficient of $g$ that is not in $\mathfrak{m}$. Similarly, the fact that $s\cdot h=f\neq 0$ guarantees that $\overline{h}\neq 0$ in $P/\mathfrak{m}P$. Since $P/\mathfrak{m}P$ is an integral domain, we then obtain that $\overline{g}\overline{h}=\overline{gh}\neq 0$ in $P/\mathfrak{m}P$. But this means that $gh$ has at least one coefficient that is not in $\mathfrak{m}$. Finally, notice that $\mathfrak{m}=\Ann_R(s)$, since surely $\mathfrak{m}\cont \Ann_R(s)$, $\Ann_R(s)$ is a proper ideal and $\mathfrak{m}$ is maximal. So $gh$ has at least one coefficient that is not in $\Ann_R(s)$, and then $eh=sgh\neq 0$.
\end{proof}

We would now like to extend the result of \thex\ref{theorgorensteinartinianlocal} to the case of $R$ Gorenstein Artinian but not necessarily local. We will use the well-known theorem of decomposition of Artinian rings as finite products of Artinian local rings (see \cite[Theorem 8.7, p.\ 90]{AtiyahMacdonald1969}).

To pass from local to general Artinian Gorenstein coefficients, we record the
behaviour of essentiality under finite products.

\begin{lemma}\label{lemfiniteproductdecomp}
    Let $R$ be a commutative ring that decomposes as a finite product of rings
    $$R\iso R_1\times \cdots \times R_n.$$
    Then $P = R[\{x_\lambda\}]$ decomposes as
    $$P=R_1[\{x_\lambda\}]\times \cdots \times R_n[\{x_\lambda\}].$$
    
    Consider $E$ an ideal of $P = R[\{x_\lambda\}]$. And consider the canonical orthogonal idempotents $e_i=(0,\ldots, 1,\ldots 0)$ with $1$ at position $i$, seen in $R$. Note that these form a complete set of orthogonal idempotents $\{e_1,\ldots e_n\}$ of $R$. Then the following are equivalent:
    \begin{enumT}
        \item $E$ is an essential ideal of $P$;
        \item the projection $E_i:= e_i E$ of $E$ to the $i$-th factor is an essential ideal of $R_i[\{x_\lambda\}]$ for every $i\in \{1,\ldots n\}$.
    \end{enumT}
\end{lemma}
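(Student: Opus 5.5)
First I will set up the decomposition. The ring isomorphism $R\iso R_1\times\cdots\times R_n$ gives the complete set of orthogonal idempotents $e_1,\ldots,e_n$ of $R$. We have $e_iR\iso R_i$, and $e_iP=(e_iR)[\{x_\lambda\}]\iso R_i[\{x_\lambda\}]$, by applying $e_i$ coefficientwise. Then $P=e_1P\times\cdots\times e_nP$ as rings. This is routine: every polynomial $f$ splits uniquely as $f=\sum_i e_if$, and multiplication is componentwise because $e_ie_j=\delta_{ij}e_i$.

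The key structural fact is that every ideal $I$ of $P$ splits as $I=\bigoplus_i e_iI$, with $e_iI=I\cap e_iP$ an ideal of $e_iP$. Indeed $e_iI\subseteq I$ since $I$ is an ideal, and $x=\sum_i e_ix$ for every $x\in I$. Also, the ideals of $e_iP$ (as a ring with unit $e_i$) are exactly the ideals of $P$ contained in $e_iP$. In particular $E_i=e_iE$ is an ideal of $e_iP\iso R_i[\{x_\lambda\}]$, so statement $(ii)$ makes sense.

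For $(i)\aR{}(ii)$: let $J_i$ be a nonzero ideal of $e_iP$. Then $J_i$ is also a nonzero ideal of $P$, so $E\cap J_i\ne0$. Take $0\ne x\in E\cap J_i$. Since $x\in e_iP$ we have $x=e_ix\in e_iE=E_i$, so $E_i\cap J_i\ne0$.

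For $(ii)\aR{}(i)$: let $J\ne0$ be an ideal of $P$. Then some $e_iJ\ne0$, and $e_iJ$ is a nonzero ideal of $e_iP$. By essentiality of $E_i$ there is $0\ne y\in E_i\cap e_iJ$. Now $E_i=e_iE\subseteq E$ and $e_iJ\subseteq J$, so $0\ne y\in E\cap J$.

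The only point needing care, and the main obstacle such as it is, is identifying ideals of the factor $e_iP$ with the ideals of $P$ lying inside it, together with the inclusion $e_iE\subseteq E$. Both follow from $e_i\in P$ acting as the unit of $e_iP$. Everything else is bookkeeping through the isomorphism $e_iP\iso R_i[\{x_\lambda\}]$. The infinitely many indeterminates cause no difficulty, since the idempotents lie in $R$ and act coefficientwise.
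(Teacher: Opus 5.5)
Your proof is correct and follows essentially the same route as the paper. Both directions match: you regard a nonzero ideal of $e_iP\iso R_i[\{x_\lambda\}]$ as an ideal of $P$, use $x=e_ix$ for $x\in e_iP$, pick a nonzero component $e_iJ$ via $x=\sum_i e_ix$, and use the inclusions $e_iE\subseteq E$ and $e_iJ\subseteq J$. You also spell out a few identifications that the paper leaves implicit or delegates to Bourbaki.
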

\begin{proof}
    The product decomposition of $P$ is given by \cite[Chapter IV, §1, no.\ 1, p.\ IV.1]{Bou90}. Notice that $P_i:=R_i[\{x_\lambda\}]\iso e_i P$.

    We prove $(i)\aR{}(ii)$. Let $I_i$ be a non-zero ideal of $P_i$. We can view $I_i$ as an ideal of $P$, and by assumption $E\cap I_i\neq \0$. Take a non-zero element $x\in E\cap I_i$. Since $x\in I_i$, $x$ must be a multiple of $e_i$. But then we obtain that $x\in e_i E\cap I_i$. We conclude that $E_i$ is an essential ideal of $P_i$.

    We prove $(ii)\aR{}(i)$. Let $I$ be a non-zero ideal of $P$. We need to prove that $E\cap (I)\neq 0$. Take a non-zero element $x\in I$ and decompose it as $x=e_1 x+\ldots +e_n x$. Since $x\neq 0$, there exists $k\in \{1,\ldots,n\}$ such that $e_k x\neq 0$. Now, consider $I_k:=e_k I\cont I$. We have that $I_k$ is a non-zero ideal of $P_k$. Since $E_k$ is an essential ideal of $P_k$ by assumption, we conclude that $E_k\cap I_k\neq \0$. Whence
    $$E\cap I\contain E_k\cap I_k\neq 0.$$
    Therefore $E$ is an essential ideal of $P$.
\end{proof}

\begin{remark}\label{remgenofsocle}
    As we saw in the proof of \thex\ref{theorgorensteinartinianlocal}, the socle of a Gorenstein Artinian local ring is generated, as an ideal, by a single element.
    
    When we consider $R$ a Gorenstein Artinian ring that is not necessarily local, this result is still true. Indeed, it is well-known that every Gorenstein Artinian ring $R$ decomposes as a finite product
    $$R\iso R_1\times \cdots \times R_n$$
    of Gorenstein Artinian local rings (see \cite[Theorem 8.7, p.\ 90]{AtiyahMacdonald1969} and \cite[Definition 3.1.18]{BrunsHerzog1998}). Call $\{e_1,\ldots, e_n\}$ the complete set of orthogonal idempotents of $R$ built as in \lemx\ref{lemfiniteproductdecomp}. Observe that
    $$\Soc(R)\iso \Soc(R_1)\times \cdots \times \Soc(R_n)$$
    by \cite[§6, Exercise 12(6), p.\ 242]{Lam99}. For every $i\in \{1,\ldots, n\}$, there exists an element $s_i\in R_i$ that generates $\Soc(R_i)$, as an ideal of $R_i$. Then, we can construct a global element $s=e_1 s_1+\ldots +e_n s_n\in R$. And $s$ generates $\Soc(R)$ in $R$. Furthermore, $s$ is such that $e_i s=s_i$ generates $\Soc(R_i)$.
\end{remark}

\begin{corollary}\label{corgorensteinartinian}
    Let $R$ be a Gorenstein Artinian ring, and consider a decomposition of $R$ as a finite product
    $$R\iso R_1\times \cdots \times R_n$$
    of Gorenstein Artinian local rings (see \cite[Theorem 8.7, p.\ 90]{AtiyahMacdonald1969} and \cite[Definition 3.1.18]{BrunsHerzog1998}). Call $\{e_1,\ldots, e_n\}$ the complete set of orthogonal idempotents of $R$ built as in \lemx\ref{lemfiniteproductdecomp}. Then, call $s$ the generator of $\Soc(R)$ constructed in \remx\ref{remgenofsocle}.
    
    Consider $E$ an ideal of $P = R[\{x_\lambda\}]$. The following are equivalent:
    \begin{enumT}
        \item $E$ is an essential ideal of $P$;
        \item $E \cap e_i s P \neq \0$ for every $i \in \{1, \ldots, n\}$.
    \end{enumT}
\end{corollary}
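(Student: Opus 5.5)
The plan is to combine \lemx\ref{lemfiniteproductdecomp} with \thex\ref{theorgorensteinartinianlocal} applied factorwise. Write $P_i:=R_i[\{x_\lambda\}]\iso e_iP$ and $E_i:=e_iE$. By \lemx\ref{lemfiniteproductdecomp}, $E$ is essential in $P$ if and only if each $E_i$ is essential in $P_i$. Each $R_i$ is Gorenstein Artinian local, and by \remx\ref{remgenofsocle} the element $s_i=e_is$ generates $\Soc(R_i)$. Hence \thex\ref{theorgorensteinartinianlocal} says that $E_i$ is essential in $P_i$ if and only if $E_i\cap s_iP_i\ne\0$. Under the identification $P_i\iso e_iP$ we have $s_iP_i=e_isP$. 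So it remains to show, for each fixed $i$,
$$E_i\cap e_isP\neq\0 \iff E\cap e_isP\neq \0.$$

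For $(\Leftarrow)$, take $0\ne x\in E\cap e_isP$. Since $x$ is a multiple of $e_i$ and $e_i$ is idempotent, $x=e_ix\in e_iE=E_i$. So $x$ is a nonzero element of $E_i\cap e_isP$.

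For $(\Rightarrow)$, observe that $e_iE\cont E$, because $E$ is an ideal of $P$ and $e_i\in P$. Hence $E_i\cap e_isP\cont E\cap e_isP$, and the nonvanishing transfers.

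Combining these steps: (i) holds if and only if every $E_i$ is essential in $P_i$, which holds if and only if $E\cap e_isP\ne\0$ for all $i$.

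The only delicate point is making sure the identification of $P_i$ with the ideal $e_iP$ is the right one. We need $e_iP$, viewed as a ring with identity $e_i$, to be isomorphic to $R_i[\{x_\lambda\}]$, and under this isomorphism $s_iP_i$ must correspond to $e_isP$. We also need the socle generator of $R_i$ in the sense of \thex\ref{theorgorensteinartinianlocal} to correspond to $e_is$. Both follow from \remx\ref{remgenofsocle}: $e_is$ generates $\Soc(R_i)$ as an ideal, and a generator of the one-dimensional $\kappa_i$-space $\Soc(R_i)$ is exactly a nonzero element generating it as an ideal. So I expect no real obstacle beyond this bookkeeping.
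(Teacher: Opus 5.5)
Your proposal is correct and follows the paper's proof essentially step for step. You reduce via \lemx\ref{lemfiniteproductdecomp} to the factors, apply \thex\ref{theorgorensteinartinianlocal} to each $e_iE$ in $P_i$ with socle generator $e_is$, and conclude using $e_isP_i=e_isP$ and $E\cap e_isP=e_iE\cap e_isP$; the paper asserts that last equality, and you verify it by the two inclusions.
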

\begin{proof}
    Of course $(i)\aR{}(ii)$. We prove the converse. By \lemx\ref{lemfiniteproductdecomp}, it suffices to check that $e_i E$ is an essential ideal of $P_i:=R_i[\{x_\lambda\}]$ for every $i\in \{1,\ldots, n\}$. But each ring $R_i$ is Gorenstein Artinian local, and its socle $\Soc(R_i)$ is generated by $e_i s$, thanks to \remx\ref{remgenofsocle}.

    So, applying \thex\ref{theorgorensteinartinianlocal}, it suffices to check that $e_i E\cap e_i s P_i\neq \0$. But $e_i s P_i=e_i s P$, and
    $$E\cap e_i s P = e_i E\cap e_i s P.$$
    Whence we conclude.
\end{proof}

The result we obtained above for $R$ Gorenstein Artinian can be applied to the case of $R=\Z_n=\Z/n\Z$. Indeed, it is well-known that $\Z_n$ is Gorenstein Artinian (see \cite[Corollary 21.19]{Eisenbud1995}). The resulting criterion depends only on
the distinct prime divisors of $n$.

\begin{corollary}\label{cormodularnumbers}
    Consider $E$ an ideal of $P = \Z_n[\{x_\lambda\}]$ (with $n>1$). The following are equivalent:
    \begin{enumT}
        \item $E$ is an essential ideal of $\Z_n[\{x_\lambda\}]$;
        \item $E \cap \left(\frac{n}{p_i}\right)P \neq \0$ for every prime $p_i$ that divides $n$.
    \end{enumT}
\end{corollary}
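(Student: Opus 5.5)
We plan to deduce this from \corx\ref{corgorensteinartinian}, applied to the Chinese remainder decomposition of $\Z_n$. Write $n=p_1^{a_1}\cdots p_k^{a_k}$ with distinct primes $p_i$ and $a_i\ge1$. By the Chinese remainder theorem,
$$\Z_n\iso \Z_{p_1^{a_1}}\times\cdots\times \Z_{p_k^{a_k}},$$
and each factor $\Z_{p_i^{a_i}}$ is Gorenstein Artinian local. Its maximal ideal is $(p_i)$, and its socle $\Ann(p_i)$ is generated by $p_i^{a_i-1}$. We will first make the idempotents $e_i$ and the socle generator $s=\sum_i e_i s_i$ of \remx\ref{remgenofsocle} explicit, and then compare the ideals $e_i sP$ with $\left(\frac{n}{p_i}\right)P$.

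The key claim is that $e_i sP=\left(\frac{n}{p_i}\right)P$ for each $i$. It suffices to show that $e_i s$ and $n/p_i$ generate the same ideal of $\Z_n$, since then they differ by a unit factor.

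\begin{itemize}
\item Under the CRT isomorphism, $e_i s$ has component $p_i^{a_i-1}$ in the $i$-th slot and $0$ in every other slot.
\item The element $n/p_i=p_i^{a_i-1}\prod_{j\ne i}p_j^{a_j}$ is $0$ modulo $p_j^{a_j}$ for every $j\ne i$.
\item Modulo $p_i^{a_i}$, the element $n/p_i$ equals $p_i^{a_i-1}u$, where $u=\prod_{j\ne i}p_j^{a_j}$ is coprime to $p_i$ and hence a unit in $\Z_{p_i^{a_i}}$.
\end{itemize}

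So $n/p_i$ corresponds to $(0,\ldots,p_i^{a_i-1}u,\ldots,0)$. This is $e_i s$ multiplied by the unit $(1,\ldots,u,\ldots,1)$ of $\Z_n$, so the two principal ideals agree in $\Z_n$ and hence in $P$.

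Substituting this equality into condition $(ii)$ of \corx\ref{corgorensteinartinian} gives exactly condition $(ii)$ here, which proves the equivalence. The only point needing care, and the main potential obstacle, is that \corx\ref{corgorensteinartinian} refers to the particular generator $s$ built in \remx\ref{remgenofsocle}, with each $s_i$ an arbitrary generator of $\Soc(R_i)$. We handle this by noting that the condition $E\cap e_i sP\ne0$ depends only on the ideal $e_i sP$. Any two generators of the principal socle of $R_i$ differ by a unit of $R_i$, so this ideal does not depend on the choice of $s_i$. We may therefore take $s_i=p_i^{a_i-1}$ without loss of generality.
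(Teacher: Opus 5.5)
Your proposal is correct and follows essentially the same route as the paper: you reduce to \corx\ref{corgorensteinartinian} via the Chinese remainder decomposition and show that $(e_i s)=\left(\frac{n}{p_i}\right)$ in $\Z_n$. The only difference is bookkeeping. You compare the two elements in CRT coordinates and exhibit an explicit unit, whereas the paper works inside $\Z_n$ with the B\'ezout coefficients $u_i,v_i$ and the global socle generator $s=\frac{n}{p_1\cdots p_m}$ to show mutual divisibility; your check that the ideal $e_isP$ does not depend on the choice of the generators $s_i$ is a point the paper leaves implicit.
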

\begin{proof}
    Consider the prime factorization of $n$. Say that
    $$n = p_1^{k_1} p_2^{k_2} \cdots p_m^{k_m}.$$
    By the Chinese Remainder theorem, $\Z_n$ decomposes as a finite product of Gorenstein Artinian local rings as follows:
    $$\Z_n \iso \Z_{p_1^{k_1}} \times \dots \times \Z_{p_m^{k_m}}$$
    The fact that $\Z_{p_i^{k_i}}$ is local is well-known, see for example \cite[Corollary 21.19, p.\ 537]{Eisenbud1995}.
    
    Then, we have that $\Soc(R)$ is generated by
    $$s = \frac{n}{p_1 p_2 \cdots p_m}$$
    This result can be found in \cite[Section 9.1, p.\ 218]{Kas82}.
    
    The idempotents $e_i$ described in \thex\ref{corgorensteinartinian} are such that
    $$\begin{cases}
        e_i = 1 \;\; (\text{mod } p_i^{k_i})\\ 
        e_i = 0 \;\; (\text{mod } p_j^{k_j})\; \text{ for every } j\neq i
    \end{cases}$$
    These can be calculated by the Chinese Remainder theorem. A common technique employs the B\'ezout identity as follows. We know that there exist integers $u_i$ and $v_i$ such that
    $$u_i \frac{n}{p_i^{k_i}}+v_i p_i^{k_i}=1.$$
    Then
    $$e_i=u_i\frac{n}{p_i^{k_i}} \;\; (\text{mod } n).$$
    But, actually, we do not need to fully calculate these idempotents $e_i$.

    We claim that the ideal generated by $e_i s$ in $\Z_n$ coincides with the ideal generated by $\frac{n}{p_i}$. Indeed, we see that
    $$e_i s = u_i \frac{n}{p_i^{k_i}}p_i^{k_i-1}\Prod{j\neq i} p_j^{k_j-1}$$
    is a multiple of $\frac{n}{p_i^{k_i}}p_i^{k_i-1}=\frac{n}{p_i}$. Moreover, starting from $u_i \frac{n}{p_i^{k_i}}+v_i p_i^{k_i}=1$, we obtain that
    $$u_i \frac{n}{p_i^{k_i}}\frac{n}{p_i}+v_i p_i^{k_i}\frac{n}{p_i}=\frac{n}{p_i}$$
    Modulo $n$, this simplifies to
    $$\frac{n}{p_i}=e_i \frac{n}{p_i}=e_i s \Prod{j\neq i} p_j.$$
    Therefore, $(e_i s)=\left(\frac{n}{p_i}\right)$. 

    We conclude by applying \corx\ref{corgorensteinartinian}.
\end{proof}

\section{Completions of excellent rings}\label{sec:excellent}

Finally, we investigate the case in which the ring $R$ is an ideal-adic completion of an excellent ring $A$. The condition that $A$ is excellent guarantees that the completion is well-behaved, and allows us to just check minimal primes of $\widehat{A}$ over the extensions of associated primes of $A$.

\begin{theorem}\label{theorexcellentcompletions}
    Let $R=\widehat{A}^I$ be the completion of an excellent ring $A$ with respect to an ideal $I$ of $A$. Consider $E$ an ideal of $P = \widehat{A}^I[\{x_\lambda\}]$. The following are equivalent:
    \begin{enumT}
        \item $E$ is an essential ideal of $P$;
        \item $E \neq \0$ and $E \cap (f) \neq \0$ for all $f\neq 0$ such that $\Ann_{\widehat{A}^I}(\C(f)) = \mathfrak{P}$ with $\mathfrak{P}$ a minimal prime ideal in $\widehat{A}^I$ over the extension $\mathfrak{p}\widehat{A}^I$ of some $\mathfrak{p}\in \Ass(A)$ such that $\mathfrak{p}+I\neq A$.
    \end{enumT}
\end{theorem}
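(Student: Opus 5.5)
The plan is to reduce \thex\ref{theorexcellentcompletions} to \thex\ref{theornoetherian}. To do so, I would show that $R=\widehat{A}^I$ is Noetherian and that $\Ass(\widehat{A}^I)$ is exactly the set of primes $\mathfrak{P}$ that are minimal over $\mathfrak{p}\widehat{A}^I$ for some $\mathfrak{p}\in\Ass(A)$ with $\mathfrak{p}+I\neq A$. Once this identification holds, condition $(ii)$ of \thex\ref{theorexcellentcompletions} coincides with condition $(ii)$ of \thex\ref{theornoetherian}, and the equivalence follows at once. Noetherianity is standard: excellent rings are Noetherian by definition, and the $I$-adic completion of a Noetherian ring is Noetherian and flat over $A$.

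\textbf{Step 1 (flat base change of associated primes).} Since $A\to\widehat{A}^I$ is a flat homomorphism of Noetherian rings, the standard formula for associated primes under flat extensions (e.g.\ Matsumura, \emph{Commutative Ring Theory}, Theorem 23.2) gives
$$\Ass(\widehat{A}^I)=\bigcup_{\mathfrak{p}\in\Ass(A)}\Ass_{\widehat{A}^I}\bigl(\widehat{A}^I/\mathfrak{p}\widehat{A}^I\bigr).$$
By exactness of completion, $\widehat{A}^I/\mathfrak{p}\widehat{A}^I\iso\widehat{(A/\mathfrak{p})}^{I}$. This ring is zero exactly when $I(A/\mathfrak{p})=A/\mathfrak{p}$, i.e.\ when $\mathfrak{p}+I=A$. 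So only the primes $\mathfrak{p}$ with $\mathfrak{p}+I\neq A$ contribute, which explains that restriction in the statement.

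\textbf{Step 2 (reducedness via excellence).} For each such $\mathfrak{p}$, I will show that $\Ass(\widehat{A}^I/\mathfrak{p}\widehat{A}^I)$ equals the set of primes minimal over $\mathfrak{p}\widehat{A}^I$. It suffices to show that $\widehat{(A/\mathfrak{p})}^I$ is reduced. Then this Noetherian reduced ring satisfies $(S_1)$, so its associated primes are its minimal primes, as recalled in Subsection~\ref{subsec:s1}. Now $A/\mathfrak{p}$ is an excellent domain, being a quotient of an excellent ring. For an excellent (in particular a G-) ring, the $I$-adic completion map $A/\mathfrak{p}\to\widehat{(A/\mathfrak{p})}^I$ is a regular homomorphism, by Grothendieck's results (EGA IV, 7.8.3, or Matsumura's treatment of G-rings). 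Regular homomorphisms preserve reducedness, since the property $(R_0)+(S_1)$ ascends along flat maps with geometrically regular fibers. Hence $\widehat{(A/\mathfrak{p})}^I$ is reduced.

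\textbf{Step 3 (conclusion).} Combining Steps 1 and 2 identifies $\Ass(\widehat{A}^I)$ with the set of primes described in $(ii)$. Applying \thex\ref{theornoetherian} to the Noetherian ring $\widehat{A}^I$ then finishes the proof.

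The main obstacle is Step 2. The delicate point is that the completion is taken with respect to an arbitrary ideal $I$, not a maximal ideal, so we need regularity of the $I$-adic completion map rather than of the usual local completion. I would first try to cite the theorem that the $I$-adic completion of a Noetherian G-ring is a regular map. If that proves troublesome, I would try to reduce to the local case by localizing at the maximal ideals of $\widehat{A}^I$, which contain $I\widehat{A}^I$ because $I\widehat{A}^I$ lies in the Jacobson radical. There I would compare with the $\mathfrak{m}$-adic completions, where reducedness for excellent local rings is classical, and use faithful flatness to descend reducedness.
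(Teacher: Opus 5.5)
Your proposal is correct and takes essentially the same route as the paper. Both arguments reduce to Theorem~\ref{theornoetherian}, use the flat-extension formula for $\Ass$ (Matsumura 23.2), discard the $\mathfrak{p}$ comaximal with $I$, and get reducedness of $\widehat{A}^I/\mathfrak{p}\widehat{A}^I$ from regularity of the completion map, so that $(S_1)$ gives $\Ass=\Min$. The only differences are minor: you remove the comaximal primes via $\widehat{A}^I/\mathfrak{p}\widehat{A}^I\iso\widehat{(A/\mathfrak{p})}^I$ instead of the Jacobson-radical argument, and you apply the G-ring completion theorem directly to the excellent quotient $A/\mathfrak{p}$ instead of base-changing the regular map $A\to\widehat{A}^I$.
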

\begin{proof}
    Since $A$ is Noetherian, it is well-known that the completion $\widehat{A}^I$ of $A$ is Noetherian as well (see \cite[Chapter 10, Theorem 10.26, p.\ 113]{AtiyahMacdonald1969}). Then, by \thex\ref{theornoetherian}, $E$ is an essential ideal of $P$ if and only if $E\neq \0$ and $E\cap (g) \neq \0$ for all $g\neq 0$ such that $\Ann_{\widehat{A}^I}(\C(g)) \in \Ass(\widehat{A}^I)$.

    We claim that $\Ass(\widehat{A}^I)$ precisely coincides with the set of all minimal prime ideals in $\widehat{A}^I$ over the extension $\mathfrak{p}\widehat{A}^I$ of some $\mathfrak{p}\in \Ass(A)$ such that $\mathfrak{p}+I\neq A$. That is,
    $$\Ass(\widehat{A}^I) = \bigcup_{\substack{\mathfrak{p} \in \Ass(A) \\ \mathfrak{p} + I \neq A}} \Min(\widehat{A}^I / \mathfrak{p}\widehat{A}^I)$$
    where we slightly abuse the notation to identify the prime ideals of $\widehat{A}^I / \mathfrak{p}\widehat{A}^I$ with the prime ideals of $\widehat{A}^I$ that contain $\mathfrak{p}\widehat{A}^I$. After proving this, it will be immediate to conclude.
    
    To prove the claim, we use the fact that the $I$-adic completion map $\phi: A \to \widehat{A}^I$ is a regular ring homomorphism, thanks to the fact that $A$ is excellent. References for this are \cite[§33.I, Theorem 79]{Matsumura1986} and \cite[5–231, Scholie (7.8.3)(v), p.\ 215]{Gro65}. Since $\phi$ is in particular flat, by \cite[§23, Theorem 23.2(ii), p.\ 180.]{Matsumura1986} the associated primes of the extension ring $\widehat{A}^I$ are regulated by the associated primes of the base ring $A$ as follows:
    $$\Ass(\widehat{A}^I) = \bigcup_{\mathfrak{p} \in \Ass(A)} \Ass(\widehat{A}^I / \mathfrak{p}\widehat{A}^I).$$
    Moreover, we show that the associated primes $\mathfrak{p} \in \Ass(A)$ that are comaximal with $I$, i.e.\ such that $\mathfrak{p} + I = A$, do not contribute to the associated primes of $\widehat{A}^I$. For this, let $\mathfrak{p}$ be a prime ideal of $A$ such that $\mathfrak{p} + I = A$. We can then write $1=p+x$ with $p\in \mathfrak{p}$ and $x\in I$; whence $p=1-x$. By \cite[Lemma 10.96.6(4), Tag 05GI]{Sta26} (see also \cite[Theorem 8.1(i)]{Matsumura1986}), $x\in I\widehat{A}^I\cont J$ where $J$ is the Jacobson radical of $\widehat{A}^I$. As a consequence, $p=1-x$ must be a unit in $\widehat{A}^I$. Whence $\mathfrak{p}\widehat{A}^I=\widehat{A}^I$. The quotient $\widehat{A}^I/\mathfrak{p}\widehat{A}^I$ thus becomes the zero ring, and has no associated primes.

    It remains to prove that for every $\mathfrak{p}\in \Ass(A)$
    $$\Ass(\widehat{A}^I / \mathfrak{p}\widehat{A}^I)=\Min(\widehat{A}^I / \mathfrak{p}\widehat{A}^I).$$
    So let $\mathfrak{p}\in \Ass(A)$. Since $\phi\:A\to \widehat{A}^I$ is a regular homomorphism between Noetherian rings, and the property of being a regular homomorphism is preserved under base change (see \cite[Lemma 33.4, pp.\ 251–252]{Matsumura1986} and also \cite[Theorem 16.12.1, Tag 07GC, and Lemma 15.42.5, Tag 07EP]{Sta26}), we have that
    $$A/\mathfrak{p}\otimes_A \phi\: A/\mathfrak{p}\too A/\mathfrak{p}\otimes_A \widehat{A}^I\iso \widehat{A}^I / \mathfrak{p}\widehat{A}^I$$
    is a regular homomorphism. For this, notice also that $\widehat{A}^I / \mathfrak{p}\widehat{A}^I$ is Noetherian, since $\widehat{A}^I$ is Noetherian.

    By \cite[Lemma 15.43.1, Tag 07QK]{Sta26}, since $A/\mathfrak{p}$ is reduced and $A/\mathfrak{p}\otimes_A \phi$ is a regular homomorphism, $\widehat{A}^I/ \mathfrak{p}\widehat{A}^I$ must be reduced as well. So $\widehat{A}^I/ \mathfrak{p}\widehat{A}^I$ is a Noetherian reduced ring. As also recalled in Subsection~\ref{subsec:s1}, \mbox{Noetherian} reduced rings satisfy Serre's condition (S1). Whence the associated primes of $\widehat{A}^I/ \mathfrak{p}\widehat{A}^I$ coincide with the minimal primes.    
\end{proof}

\end{document}